\documentclass{article}

\usepackage{enumitem, verbatim, graphicx}
\usepackage{amsmath, amssymb, amsthm, float, subcaption, hyperref, tikz}

\newtheorem{theorem}{Theorem}[section]
\newtheorem{proposition}[theorem]{Proposition}
\newtheorem{lemma}[theorem]{Lemma}

\newtheorem{problem}[theorem]{Problem}

\newcommand{\polytope}{\mathcal{P}}
\newcommand{\position}{{\overline{\polytope}}}
\newcommand{\randomtrace}{T_\polytope^{\mathrm{rand}}}
\newcommand{\tran}{t}

\newcommand{\set}[1]{\left\{#1\right\}}

\newcommand{\RR}{\mathbb{R}}
\newcommand{\ve}{\varepsilon}

\newcommand{\case}[1]{
\vspace{3mm}

\noindent
\textbf{Case #1:}
}

\DeclareMathOperator{\closure}{cl}

\title{On traces of randomly rolling polytopes}
\author{Kenneth Moore\thanks{R\'enyi Institute, 1053 Budapest, Re\'altanoda u. 13-15, Hungary. Supported by ERC Advanced Grants 882971``GeoScape'' and ``ERCiD.'' Email:
{\tt moore.kenneth@renyi.hu}.} \and J\'anos Pach\thanks{R\'enyi Institute, 1053 Budapest, Re\'altanoda u. 13-15, Hungary. Supported by ERC Advanced Grant 882971``GeoScape.'' Email: {\tt pach@renyi.hu}.}}
\date{}

\begin{document}

\maketitle

\begin{abstract}
    Let $\mathcal{P}$ be a three-dimensional convex polytope resting with one of its faces on the plane. At each step, $\mathcal{P}$ is allowed to roll over a randomly selected edge of the face currently lying on the plane, until the adjacent face comes to rest on the plane. The \emph{trace} of $\mathcal{P}$ is the set of all points of the plane that can be reached by a vertex of $\mathcal{P}$, starting from a fixed initial position and performing a finite sequence of rolls. We prove that if the trace of $\mathcal{P}$ has a convergent subsequence, then, with probability one, the set of points reached by the vertices of a randomly rolling copy of $\mathcal{P}$ is everywhere dense in the plane. This settles a conjecture of Hegyv\'ari.
\end{abstract}

\section{Introduction}

Consider a 3-dimensional convex polytope $\polytope$ standing with one face on a plane. It can be \emph{rolled} from its original position around one of its edges lying on the plane, until another face hits the plane. The \emph{trace} of $\polytope$, denoted $T_\polytope$, is the set of all points in the plane that coincide with a vertex of $\polytope$, after performing some sequence of rolls. If $\polytope$ is a unit cube, its trace is a square lattice.

The study of traces was initiated by Hegyv\'ari~\cite{hegyvari1995}. He characterized the density of traces for regular polyhedra and rectangular parallelepipeds, and gave a sufficient condition for a general polyhedron to have dense trace. Hegyv\'ari and Wintsche later studied the Archimedean polyhedra and showed that all but one of them have dense trace~\cite{Hegyvari-Wintsche1998}.  

We may also consider a random rolling process. At each step, an edge of the current supporting face is chosen uniformly at random, and the polytope is rolled over that edge. We denote by $\randomtrace$ the random set of points in the plane that coincide with a vertex of $\polytope$ at some time along the resulting infinite sequence of rolls, including time zero.

A set $T$ is said to be \emph{locally dense in ${\RR}^2$} if it is dense in a nonempty neighborhood of \emph{some} point $p\in\RR^2$. It is \emph{dense in} a simple continuous \emph{curve} $\gamma$ if every nonempty subcurve of $\gamma$ contains a point in $T$.

Hegyv\'ari \cite{Hegyvari2016} posed the following two problems about traces of polytopes.

\begin{problem}
    \label{prob:question1}
    Is it true that if $T_\polytope$ is dense on a smooth curve $\gamma$, then $T_\polytope$ is dense in the plane?
\end{problem}

\begin{problem} 
    \label{prob:question2}
    Assume that $T_\polytope$ is dense in the plane. Is it true that, with probability 1, $\randomtrace$ is dense in the plane?
\end{problem}

In fact, an affirmative answer to Problem~\ref{prob:question1} can be deduced from earlier work of Bicchi, Chitour, and Marigo~\cite[Theorem 1]{BicchiChitourMarigo2004}, where the rolling process was interpreted as a discrete nonholonomic system. To make our paper self-contained, in Section~\ref{sec1} we give a short direct proof of the following stronger statement.

\begin{theorem}
    \label{thm:question3}
    If a polytope $\polytope$ satisfies
    $$\inf\set{|p-q|: p,q\in T_\polytope,\ p\neq q}=0,$$ then $T_\polytope$ is dense in the plane.
\end{theorem}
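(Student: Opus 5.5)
The plan is to recast the trace as an orbit of a group of plane isometries and then use the structure of closed subgroups of $\mathrm{Isom}^+(\RR^2)$. Fix the initial face $f_0$. After a sequence of rolls that brings $f_0$ back onto the plane, the new position of $f_0$ differs from the initial one by an orientation‑preserving isometry of the plane. Rolling back over the same edge undoes a roll, so these isometries form a group $G$. For every face $f$, fix one roll sequence $w_f$ from $f_0$ to $f$. Every position with $f$ down is then $g\circ(\text{position after } w_f)$ for some $g\in G$. Hence
$$T_\polytope=\bigcup_{a\in A} G\cdot a,$$
where $A$ is the finite set of vertex positions reached by the paths $w_f$.

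\textbf{Step 1: $G$ is not discrete.} Suppose $G$ were discrete. For $p=g_1a\neq q=g_2b$ we have $|p-q|=|a-g_1^{-1}g_2b|$, so this distance is the distance from $a$ to a point of the orbit $Gb$. The orbit of a discrete group is closed and discrete, so for fixed $a,b$ these nonzero distances are bounded below. Since $A$ is finite, the infimum in the hypothesis would then be positive, a contradiction. So the closure $H=\closure G$ is a closed subgroup whose identity component $H_0$ has positive dimension, and $H_0$ is normal in $H$. The connected subgroups of $\mathrm{Isom}^+(\RR^2)$ of positive dimension are: the rotations about a point $c$, the translations along one direction $L$, all translations, and the whole group. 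It suffices to show that $H_0$ contains all translations. Then $G$ contains translations that are arbitrarily small in every direction (since they are dense in $H_0$), so each orbit $Ga$, and hence $T_\polytope$, is dense.

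\textbf{Step 2: excluding rotations about a point.} If $H_0$ were the rotations about $c$, normality would force every element of $H$, hence of $G$, to fix $c$. Then all positions with $f_0$ down would lie within bounded distance of $c$. Since the face graph is finite and connected, all positions of $\polytope$ would then be bounded, so $T_\polytope$ would be bounded. I would refute this by showing the polytope can be rolled arbitrarily far: I would repeatedly choose the roll that moves the foot of the polytope away from a fixed point, for instance by tracking the projection of the centroid. This is the step I expect to need the most care, although it is intuitively clear.

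\textbf{Step 3: excluding one direction of translations.} If $H_0$ were the translations along $L$, normality would force the rotation part of every element of $G$ to preserve $L$, i.e.\ to be $0$ or $\pi$. Now consider rolling successively around a vertex $v$ through all faces incident to $v$. This returns to the starting face rotated about $v$ by the angular defect $\delta_v\in(0,2\pi)$. Conjugating by a path from $f_0$ gives an element of $G$ with rotation angle $\delta_v$. So every defect would equal $\pi$. By Descartes' theorem ($\sum_v\delta_v=4\pi$) the polytope would have exactly four vertices, each with angle sum $\pi$, i.e.\ it would be an isosceles tetrahedron. Its congruent faces roll out to a periodic triangle tiling, so its trace is a lattice‑like discrete set, contradicting Step 1. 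Therefore $H_0$ contains all translations, and the theorem follows.
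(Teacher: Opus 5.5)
Your proof is correct, and it takes a genuinely different route from the paper's.

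\textbf{How the routes differ.} The paper proceeds in four stages:
\begin{enumerate}
\item It isolates a single vertex $v$ whose own trace $T_v$ has an accumulation point (Lemma~\ref{lem:samevertex}).
\item It shows $T_v$ is invariant under the rigid motions relating positions with the same face down, and classifies closures of such homogeneous sets (parallel lines, a circle, or the plane).
\item It eliminates the circle and the lines by geometric rolling arguments around a point.
\item It finishes with Hegyv\'ari's local-to-global Lemma~\ref{lem:local}.
\end{enumerate}
You instead work with the group $G$ of face-return isometries, which the paper only uses in Section~\ref{sec:return-times}, and write $T_\polytope$ as a finite union of $G$-orbits. This buys you three things:
\begin{enumerate}
\item Non-discreteness of $G$ becomes a one-line consequence of the hypothesis, replacing Lemma~\ref{lem:samevertex}.
\item Normality of $H_0$ in $H$ turns the two bad cases into global constraints on $G$: a common fixed point, or all rotation angles in $\{0,\pi\}$. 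The vertex rotations and Descartes' theorem then refute these.
\item Density of every orbit comes out directly, so Lemma~\ref{lem:local} is not needed.
\end{enumerate}
Your handling of the case where every angle sum equals $\pi$ is also cleaner than the paper's ``repeat the above argument on the set of all such $q$''. In exchange, the paper's route gives a description of $\closure T_v$ itself.

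\textbf{Step~2 is only announced.} Your centroid idea does work, but you need to carry it out.
\begin{itemize}
\item The centroid lies inside the dihedral wedge at the edge $e$ you roll over.
\item Hence its projection moves in the direction of the outward normal of $e$, by a strictly positive amount that depends only on the face and $e$ (finitely many values).
\item Always roll over an edge whose outward normal makes a non-obtuse angle with the current displacement from the fixed point. Such an edge exists, since the outward normals of a convex polygon positively span the plane.
\item Each such roll raises the squared distance by a fixed positive amount, so $T_\polytope$ is unbounded.
\end{itemize}

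\textbf{A quicker repair for Step~2.} Reuse your Step~3 device, but keep track of the centre of the rotation, not just its angle. Let $Q=gP_f$ be a valid position, with $P_f$ the position after $w_f$, and let a vertex $v$ of $f$ sit at $p$. The full roll around $v$ gives $RQ=g'P_f$ with $g'\in G$ and $R$ a nontrivial rotation about $p$. Hence $R=g'g^{-1}\in G$. So $G$ contains a nontrivial rotation about every point of $T_\polytope$. A common fixed point $c$ would then force $T_\polytope=\{c\}$, which is absurd.

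\textbf{A smaller correction.} The parenthetical at the end of Step~1 is wrong when $H_0$ is the whole orientation-preserving group: the translations in $G$ cannot then be dense in $H_0$. Simply use $\closure(Ga)\supseteq Ha\supseteq H_0a=\RR^2$, which follows from the density of $G$ in $H$.
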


The conditions of Theorem~\ref{thm:question3} merely say that the trace contains distinct points at arbitrarily small distance. In particular, it holds whenever the trace is dense in a nontrivial curve. The proof will use

\begin{lemma} [Hegyv\'ari~\cite{Hegyvari2016}]
\label{lem:local}
If the trace of a polytope $\polytope$ is locally dense in the plane, then it is also dense in the plane.
\end{lemma}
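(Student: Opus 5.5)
The plan is to use the fact that rolling sequences can be reversed and concatenated. This turns the set of rolling sequences that bring $\polytope$ back to its initial face into a group $H$ of orientation-preserving isometries of the plane. Local density should then spread to the whole plane by the action of $H$, once we show that the closure of $H$ is large.

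\emph{Step 1: the group $H$.} Fix the initial face $F_0$ and its initial position. Any finite sequence $w$ of rolls that ends with $F_0$ on the plane again moves the polytope by an orientation-preserving isometry $h_w$ of the plane. Concatenating sequences composes the isometries, and reversing a sequence (rolling back over the same edges in reverse order) gives $h_w^{-1}$. Hence $H=\set{h_w}$ is a subgroup of the orientation-preserving isometry group $E^+(2)$. Every roll available from the initial position is also available after $w$, transported by $h_w$. Therefore
$$h(T_\polytope)\subseteq T_\polytope \quad\text{for every } h\in H,$$
and in fact $h(T_\polytope)=T_\polytope$, since $h^{-1}\in H$.

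Let $S$ be the open set of points near which $T_\polytope$ is dense. By hypothesis $S\neq\emptyset$, and by the above, $S$ is $H$-invariant. Now let $\bar H$ be the closure of $H$ in $E^+(2)$. If $B$ is a disc with $\closure B\subset S$ and $g\in\bar H$, pick $h\in H$ close to $g$. Then $g(B')\subseteq h(B)\subseteq S$ for a slightly smaller concentric disc $B'$. Consequently $S$ contains a neighbourhood of every point of $\bar H p$ for each $p\in S$. It therefore suffices to show that some orbit $\bar H p$ with $p\in S$ is the whole plane.

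\emph{Step 2: $H$ is not discrete.} Density of $T_\polytope$ in a disc $D\subseteq S$ means infinitely many distinct configurations of $\polytope$ have a vertex in $D$. A configuration is determined by the supporting face together with an isometry placing that face. The polytope has finitely many faces, and the placements are confined to a compact set because a vertex lies in $D$. So by pigeonhole and compactness there are distinct configurations $g_i\neq g_j$ with the same supporting face $F$ and $g_j g_i^{-1}$ arbitrarily close to the identity.

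Let $u$ be a fixed rolling sequence taking $F_0$ to $F$. The sequence ``reach $g_j$, then undo the sequence reaching $g_i$'' returns to $F_0$, and after conjugating by $u$ it yields elements of $H$ that differ from the identity but are arbitrarily close to it. Thus $\bar H$ is a closed, non-discrete subgroup of $E^+(2)$, so its identity component $\bar H^0$ is nontrivial. The possible identity components are:
\begin{itemize}
\item[(a)] all rotations about one point $c$;
\item[(b)] all translations in one direction $v$;
\item[(c)] all translations;
\item[(d)] all of $E^+(2)$.
\end{itemize}
In cases (c) and (d) every orbit of $\bar H$ is the plane, and we are done.

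\emph{Step 3 (main obstacle): excluding (a) and (b).} Here I expect the real work. In case (a), $\bar H^0$ is normal in $\bar H$, so every element of $\bar H$ fixes $c$, and hence $H$ fixes $c$. I would refute this by exhibiting two rolling sequences returning to $F_0$ whose isometries are rotations with distinct centres. The first thing to try is explicit loops: go around a vertex $v$ of $F_0$, rolling successively over the faces incident to $v$ until $F_0$ returns. That loop is a rotation about the image of $v$, with angle equal to the angular defect at $v$. Two vertices of $F_0$ with nonzero defect give rotations about two distinct points, and the defects are nonzero because $\polytope$ is convex. So $H$ cannot fix a point, which excludes (a).

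In case (b), normality of $\bar H^0$ forces every rotation in $H$ to have angle $0$ or $\pi$. The vertex loops, however, give rotations by the angular defects. If some defect is not a multiple of $\pi$, conjugating the translations in direction $v$ by that rotation produces translations in a second direction, and we are in case (c). The remaining degenerate situation, where all defects are multiples of $\pi$, I would handle by combining vertex loops around vertices of different faces: these conjugate $v$ to directions not parallel to it, again forcing case (c).

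Once (a) and (b) are excluded, Step 1 gives $S=\RR^2$, that is, $T_\polytope$ is dense in the plane.
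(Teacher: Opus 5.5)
Everything up to the last sentence of Step 3 is sound. That includes $H$-invariance, non-discreteness of $H$, the exclusion of case (a), and the non-degenerate part of (b). The paper only cites this lemma from Hegyv\'ari and does not prove it. Your group-theoretic route is close to the paper's proof of Theorem~\ref{thm:question3}.

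The gap is the degenerate subcase of (b), and the argument you propose there cannot work. By convexity each angular defect lies in $(0,2\pi)$. So ``all defects are multiples of $\pi$'' means every vertex of $\polytope$ has defect exactly $\pi$. Then every vertex loop is a half-turn. So is every loop around a vertex of another face, since as an element of $H$ it has the form $u\ell u^{-1}$ and the same rotation angle as $\ell$. Conjugating the translation by $t$ with a half-turn gives the translation by $-t$, which is parallel to $t$. So no combination of such loops can produce a translation in a second direction, and you cannot reach case (c) this way. In fact $H$ is generated by conjugated vertex loops, because the dual graph cellulates the sphere. Hence in this situation every element of $H$ is a translation or a half-turn, and nothing else in $H$ could help either.

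What is needed is to use the hypothesis to rule this case out. By Descartes' theorem, $\sum_v\delta_v=4\pi$, so $\polytope$ has exactly four vertices. A tetrahedron whose face angles sum to $\pi$ at every vertex is isosceles: opposite edges are equal and all four faces are congruent. Rolling over an edge then places the new face at the image of the old one under the half-turn about that edge's midpoint. So all positions are tiles of the standard tiling of the plane by copies of one face, and $T_\polytope$ is a lattice. Hence $H$ is discrete, contradicting your Step 2. This is the same exceptional tetrahedron the paper excludes in Case 2 of its proof of Theorem~\ref{thm:question3}. With this replacement for your last sentence, the proof is complete.
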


The main result of this paper provides an affirmative answer to Problem~\ref{prob:question2} for a substantially broader class of random rolling processes. For a face $F$ of $\polytope$, $\operatorname{adj}(F)$ denotes the set of faces sharing one edge with $F$, and $\deg(F)=|\operatorname{adj}(F)|.$ We call a function $\tran(F,F')$ on pairs of faces of $\polytope$ a \emph{reversible transition probability} if it satisfies the following conditions:
\begin{enumerate}
\item $\tran(F,F')>0$ whenever $F'\in \operatorname{adj}(F)$; that is, $F'$ shares one edge with $F$;
\item $\tran(F,F')=0$ whenever $F\neq F'$ and $F'\not \in \operatorname{adj}(F)$;
\item $\tran(F,F)=1-\sum_{F'\in \operatorname{adj}(F) }\tran(F,F')\geq 0$ for each face $F$;
\item There is a function $\pi$ assigning a positive weight to each face so that for all faces $F,F'$,
$$\pi(F)\tran(F,F')=\pi(F')\tran(F',F).$$
\end{enumerate}
These conditions allow a walk on the face-adjacency graph of $\polytope$ with transition probabilities $\tran(F,F')$ to form a reversible finite-state Markov Chain. The language of Markov chains will be important for the proof, and some background on them along with \emph{detailed balance} (which is condition 4 for $\tran$) may be found in~\cite[Chapter 1]{LevinPeres2017}. We can now state 

\begin{theorem}
    \label{thm:randomtrace}
Suppose the trace of a polytope $T_{\mathcal P}$ is dense in $\mathbb R^2$. 
At each step, roll $\mathcal P$ from its current supporting face $F$ to a new face $F'$ according to a reversible transition probability function $\tran(F,F')$. Then, with probability one, the set of vertices coinciding with a vertex of $\polytope$ at some time is dense in $\RR^2$.
\end{theorem}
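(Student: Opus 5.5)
The plan is to encode the rolling process as a random walk on a group of rigid motions and then use recurrence of a mean-zero (reversible) walk to show that the random walk visits every "configuration up to small error" infinitely often.

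\emph{Step 1: the group of positions.} Each position of $\polytope$ is determined by its supporting face $F$ and the orientation-preserving isometry $g\in SE(2)$ placing $F$ in the plane. Fix a reference position $g_F$ for each face. Rolling from $F$ over an edge to $F'$ changes $g$ by right multiplication with a fixed element $h_{F,F'}\in SE(2)$, depending only on $F$ and $F'$. A sequence of rolls therefore gives a product of these generators, and the random process is a Markov chain on the finite state space of faces, with increments in $SE(2)$. Let $G\subset SE(2)$ be the group generated by the loop products from a base face $F_0$. Density of $T_\polytope$ in $\RR^2$ forces the translational parts of the elements of $G$ to be dense.

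\emph{Step 2: rotation part.} Project to the rotation component $SO(2)$. Two cases arise.
\begin{itemize}
\item If all rotations in $G$ lie in a finite subgroup, then after passing to a finite cover of the state space, recording face together with rotation class, the translation part becomes a reversible Markov-additive walk on $\RR^2$ driven by a finite chain.
\item If $G$ contains an irrational rotation, the rotation component is equidistributed. I would then use a Markov-chain ergodic theorem on the compact part, coupled with the translation.
\end{itemize}

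\emph{Step 3: recurrence.} This is the key step. Reversibility (detailed balance) implies that the drift of the additive functional vanishes. Over a cycle, the reversed cycle has equal probability and its increment is the inverse isometry, so the stationary mean increment of the translation in the appropriate frame is zero. A mean-zero Markov-additive walk in dimension two with finite-range increments is recurrent in the following sense. For every $\ve>0$ and every target element $x$ in the closure of the group of achievable increments, the walk comes within $\ve$ of $x$ infinitely often, almost surely. I would prove this either by citing the recurrence theory for Markov random walks, or by a direct argument:
\begin{itemize}
\item build excursions from $F_0$, which are i.i.d. by the strong Markov property;
\item observe that the excursion increments form an i.i.d. random walk on $SE(2)$ whose translational part, after conditioning on the rotation, is centred with finite second moment (excursion lengths have exponential tails);
\item invoke Chung–Fuchs-type recurrence in $\RR^2$ to get neighborhood recurrence on the closed subgroup generated by the support.
\end{itemize}
The hard part is handling the rotation component, since the translation increments are twisted by rotations. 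In the finite-rotation case, the abelianization trick above works. In the irrational case, I would try first to show that the induced walk on $SE(2)$ is recurrent via the known recurrence of centred random walks on $SE(2)$, a group of polynomial growth of degree $2$ (Varopoulos).

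\emph{Step 4: conclusion.} Given a target point $p$ and $\ve>0$, density of $T_\polytope$ gives a finite roll sequence $w$ from $F_0$ reaching within $\ve/2$ of $p$ with some vertex. By Step 3, the random walk returns to face $F_0$ with position within a tiny $\delta$ of the identity infinitely often. Each such return is followed by $w$ with positive probability $\prod t>0$, and these trials are independent across returns. Borel–Cantelli then says that $w$ is executed after some such return almost surely. Choosing $\delta$ small, using that the isometries are Lipschitz on the bounded region relevant to $w$, puts a vertex within $\ve$ of $p$. Taking countably many $p$ and $\ve=1/n$ gives density with probability one.
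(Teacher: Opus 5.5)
Your outline matches the paper: returns to $F_0$, i.i.d.\ increments with $h_1\overset{d}{=}h_1^{-1}$ from detailed balance, a finite second moment from exponential tails, repeated trials of a fixed roll word, and a countable basis of disks. The gap is Step 3, which is the real content (the paper's Proposition~\ref{prop:recurrence}). You do not prove it, and your sketch breaks in the irrational-rotation case. The translation part of $g_n=h_1\cdots h_n$ is $\sum_k A_{g_{k-1}}b_{h_k}$, twisted by the accumulated rotation. It is not an i.i.d.\ sum, so Chung--Fuchs does not apply. Moreover, recurrence of the translation plus equidistribution of the rotation would not put $g_n$ near the identity in both components \emph{at the same time}, which is what Step 4 uses. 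Your claim that the translation is centred conditionally on the rotation is false in every frame. Take rotations by $\pm\theta$ about $c_1$ and by $\pm\phi$ about $c_2\neq c_1$ (generic angles), each with probability $\frac14$. The law is symmetric, yet given $R_\theta$ only $c_1$ has zero mean displacement, and given $R_\phi$ only $c_2$ does. The appeal to Varopoulos does not fill the hole as stated. The abstract group generated by the support can have exponential growth (a rotation by $\theta$ with $e^{i\theta}$ transcendental and a translation generate $\mathbb{Z}\wr\mathbb{Z}$), so exact returns may be transient. What you need is \emph{topological} recurrence in $\mathcal{E}(2)$ for a purely atomic, non-spread-out law, and that requires its own argument. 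Your finite-rotation case can be saved: observe only returns to $F_0$ with trivial rotation. The increments are then pure translations, symmetric by reversal, with exponential tails, so Chung--Fuchs applies directly. With irrational rotations, however, the rotation parts walk on a finitely generated abelian group of rank possibly $\geq 3$. Exact rotational returns can then be transient, and no such reduction is available.

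The missing idea is the paper's centring-plus-collision argument. Symmetry gives a point $c$ with $\mathbb{E}[h_1(c)]=c$: if $\overline A^{T}v=v$ then $A_{h_1}v=v$ almost surely, and $h_1\overset{d}{=}h_1^{-1}$ forces $\langle\overline b,v\rangle=0$. So $g_n(c)$ is an $L^2$ martingale with $\mathbb{E}|g_n(c)-c|^2=n\sigma^2$ despite the twisting. Hence $\rho(g_n,\operatorname{id})\le C\sqrt n$ with probability at least $\frac12$. Because the rotation factor is compact, this ball is covered by $O(n)$ left translates of a small identity neighbourhood $V$. By Cauchy--Schwarz, two independent copies land in the same piece with probability at least $1/(4Kn)$, and $g_n^{-1}g_n'\overset{d}{=}g_{2n}$ by symmetry. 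Therefore $\sum_n\mathbb{P}(g_n\in U)=\infty$, and the strong Markov property upgrades this to almost sure recurrence. This controls rotation and translation jointly, needs no case split, and never requires identifying $G$. A smaller point in Step 4: trials begun at different good returns can overlap, so they are not independent as you state. Space the stopping times by the length of the word and apply the strong Markov property, as the paper does.
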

Note that for the purposes of Problem~\ref{prob:question2}, we would set $\tran(F,F')=\frac{1}{\deg(F)}$ for $F'\in \operatorname{adj}(F)$, and $\pi(F)=\deg(F)$. An outline of the proof of Theorem~\ref{thm:randomtrace} is given at the start of Section~\ref{sec:random-outline}, and the full proof is in Sections~\ref{sec:recurrence}--\ref{sec:random-proof}.

The main ingredient of the proof is a Euclidean motion-group analogue of classical planar recurrence (Proposition~\ref{prop:recurrence}), which began with the Chung--Fuchs theorem~\cite{ChungFuchs1951}. Random walks on Euclidean motion groups have also been studied by Cr\'epel~\cite{Crepel1974}, Baldi, Bougerol, and Cr\'epel~\cite{BaldiBougerolCrepel1978} which is particularly relevant; we mention more details on this at the start of Section~\ref{sec:random-outline}.

For more recent work on products of random Euclidean isometries, see Lindenstrauss and Varj\'u~\cite{LindenstraussVarju2016}, and for some closely related questions, see \cite{dekker1959} and~\cite{wagon1993} (chap. 5, p. 69). For algorithmic reachability issues in the special case where the polyhedron is required to roll on a periodic tessellation of the plane, see  Baes et al.~\cite{BaesEtAl2022}.

\section{Proof of Theorem~\ref{thm:question3}}\label{sec1}

According to Lemma~\ref{lem:local}, it is sufficient to prove that $T_\polytope$ is locally dense. Let $\position_0$ denote the starting position of the polytope. Any edge $e$ of the face the polytope sits on can be associated with a roll $R$.  Let $R(\position_0)$ denote the polytope's new position after rolling over the given edge $e$. Note that $R(R(\position))=\position$, so $R=R^{-1}$. We write $\position(v)$ for the position of the vertex $v$ when $\polytope$ is at position $\position$. The shorthand for the position after a sequence of rolls 
\begin{equation*}
    \begin{aligned}
    \mathcal{R}(\position):=R_n(\ldots R_2(R_1(\position))),\qquad \mathcal{R}^{-1}(\position)=R_1(R_2(\ldots R_n(\position)))
    \end{aligned}
\end{equation*}
will be used. A position $\position$ of $\polytope$ with a face in the plane is \emph{valid} if there is a sequence such that $\mathcal{R}(\position_0)=\position$, and we assume, unless otherwise stated, that all positions discussed are valid. For any vertex $v$ of $\polytope$, denote by $T_\polytope(v)$ the set of all points $p$ such that there is a valid position $\position(v)$, where $\position(v)=p$.

\begin{lemma}\label{lem:samevertex}
For some vertex $v$, $T_\polytope(v)$ contains an accumulation point.
\end{lemma}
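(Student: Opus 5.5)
The plan is a pigeonhole argument on the finitely many vertices and faces of $\polytope$, working under the hypothesis of Theorem~\ref{thm:question3}. By that hypothesis there are pairs of distinct points $p_k\neq q_k$ in $T_\polytope$ with $|p_k-q_k|\to 0$. The difficulty is that $p_k$ and $q_k$ might be realized by different vertices, and the points might escape to infinity. The statement only asks for \emph{some} vertex $v$ whose trace $T_\polytope(v)$ has an accumulation point, so the aim is to turn a close pair of points into infinitely many distinct points of a single $T_\polytope(v)$ inside a bounded region.

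\textbf{Step 1: recentering.} Write $p_k=\position_k(u_k)$ and $q_k=\position'_k(w_k)$ for valid positions $\position_k,\position'_k$ and vertices $u_k,w_k$. Passing to a subsequence, I fix $u_k=u$ and $w_k=w$, and also fix the face on which $\position_k$ rests. A valid position $\position=\mathcal{R}(\position_0)$ is a rigid motion $g$ of the whole configuration. If $\position'=\mathcal{R}'(\position_0)$ is another valid position, then applying the roll sequence $\mathcal{R}'$ starting from $\position$ gives the position $g\position'$, which is again valid.

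I would use $\mathcal{R}_k^{-1}$ to move $\position_k$ back to $\position_0$. The same rigid motion $g_k^{-1}$ then sends $\position'_k$ to a position that I expect to be valid. The key observation is that rolling commutes with rigid motions of the plane, so rolling $\position_0$ by $\mathcal{R}_k^{-1}$ followed by $\mathcal{R}'_k$ yields $g_k^{-1}\position'_k$. After recentering we get points $a=\position_0(u)$, which is fixed, and $b_k=g_k^{-1}\position'_k(w)\in T_\polytope(w)$, with $|a-b_k|=|p_k-q_k|\to 0$ and $b_k\neq a$.

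\textbf{Step 2: accumulation.} The points $b_k$ converge to $a$, and each $b_k$ lies in $T_\polytope(w)$. If infinitely many $b_k$ are distinct, then $a$ is an accumulation point of $T_\polytope(w)$ and we are done with $v=w$. Otherwise some value $b$ repeats infinitely often. Since $b_k\to a$, this forces $b=a$, which contradicts $b_k\neq a$. So infinitely many $b_k$ are distinct, and they converge to $a$.

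\textbf{Main obstacle.} The step I expect to need the most care is the claim that $g_k^{-1}\position'_k$ is valid, that is, that the composition of roll sequences behaves as stated. The point to check is that roll sequences are specified by edges relative to the polytope's own labeling (edge $e$ of the current face). With that convention, performing $\mathcal{R}_k^{-1}$ and then $\mathcal{R}'_k$ is legitimate: $\mathcal{R}_k^{-1}$ brings $\position_0$ to the position $g_k^{-1}\position_0$, and then $\mathcal{R}'_k$ acts equivariantly, giving $g_k^{-1}\mathcal{R}'_k(\position_0)=g_k^{-1}\position'_k$. Here $g_k^{-1}\position_0$ rests on the same face with the same labeling because of the face-fixing subsequence.

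I would verify this equivariance carefully. It holds because each roll is determined intrinsically by which labeled edge is used, and rigid motions of the plane preserve that.
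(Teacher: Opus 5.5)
\textbf{Gap in Step~1.} Step~1 fails as written. A valid position $\mathcal{R}(\position_0)$ differs from $\position_0$ by a motion of the plane only if it rests on the same labeled face $F_0$. Otherwise the relating motion is a spatial rigid motion that moves the supporting plane, so the equivariance $\mathcal{R}'(g\position_0)=g\,\mathcal{R}'(\position_0)$ has no meaning. Also, $\mathcal{R}_k^{-1}$ begins with a roll over an edge of the final face of $\mathcal{R}_k$, so it does not act on $\position_0$ as you intend. Your face-fixing subsequence makes all $\position_k$ rest on one face $F$, but it cannot force $F=F_0$. In fact $F$ must contain $u$, since $u$ touches the plane in $\position_k$, while $F_0$ need not contain $u$.

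The repair is to recenter against a fixed valid reference position $\position^*$ resting on $F$ (for example the first $\position_k$ of the subsequence), not against $\position_0$. Write $\position_k=g_k\position^*$ with $g_k$ a planar isometry. Then $\mathcal{R}_k^{-1}(\position^*)=g_k^{-1}\position_0$ is valid, hence so is $\mathcal{R}'_k(g_k^{-1}\position_0)=g_k^{-1}\position'_k$. This gives $a=\position^*(u)$ and $b_k=g_k^{-1}(q_k)\in T_\polytope(w)$ with $0<|a-b_k|=|p_k-q_k|\to 0$. This recentering is the same idea as the paper's $\mathcal{R}_i=\mathcal{R}_{1,i}\circ\mathcal{R}_{2,i}^{-1}$.

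\textbf{The conclusion is weaker than the paper's.} Even after the repair, you only show that the point $a\in T_\polytope(u)$ is a limit of points of $T_\polytope(w)\setminus\{a\}$. When $u\neq w$, this accumulation point of $T_\polytope(w)$ need not belong to $T_\polytope(w)$. The paper proves more: some point of $T_\polytope(u)$ is an accumulation point of $T_\polytope(u)$ itself. Its last paragraph exists for exactly this purpose, in three steps.
\begin{enumerate}
\item Rolling about $u$, which lies on $F$, shows that \emph{every} point of $T_\polytope(u)$ is such a limit of points of $T_\polytope(w)$.
\item Symmetrically, every point of $T_\polytope(w)$ is a limit of points of $T_\polytope(u)$.
\item Then go back and forth, $x\to y\to z$, with $0<|x-y|<\ve/2$ and $|z-y|<\tfrac12|x-y|$. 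This gives $0<|x-z|<\ve$ with $x,z\in T_\polytope(u)$.
\end{enumerate}

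Your weaker conclusion does suffice to rule out case (a) in Lemma~\ref{lem:classification}. So under the reading ``$T_\polytope(v)$ has an accumulation point'' the repaired argument is complete. It still stops one step short of what the paper's proof establishes.
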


\begin{proof}
    Consider a sequence of pairs of points 
    $\set{(p_i,q_i)}_{i=1}^\infty\subseteq T_\polytope$ which satisfy $p_i\neq q_i$ and $|p_i-q_i|\to 0$. As there are finitely many pairs of vertices, we can choose a subsequence of these pairs so that $\set{p_i',q_i'}\subseteq T_\polytope(u)\times T_\polytope(v)$ for two not necessarily distinct vertices $u,v$ of $\polytope$. 
    
    Then there are valid positions $\position_{1,i}$, $\position_{2,i}$, where $p_i'$ coincides with $\position_{1,i}(u)$ and $q_i'$ coincides with $\position_{2,i}(v)$. Thus, there are sequences of rolls $\mathcal{R}_{1,i},\mathcal{R}_{2,i}$ such that $\mathcal{R}_{1,i} (\position_0) = \position_{1,i}$ and $\mathcal{R}_{2,i}(\position_0) = \position_{2,i}$. Then for $\mathcal{R}_i:=\mathcal{R}_{1,i}\circ\mathcal{R}_{2,i}^{-1}$, we see $\mathcal{R}_i(\position_{2,i})=\position_{1,i}$.

    Now choose any point $x\in T_\polytope(u)$, and a position $\position$ where $\position(u)=x$. The sequence $y_i:=(\mathcal{R} _i(\position))(u)$ satisfies $|x-y_i|=|p_i-q_i|\to 0$. In particular, every point of $T_\polytope(u)$ is the limit of a sequence in $T_\polytope(v)$. This argument can be applied symmetrically, so the converse is also true.
    
    Fix $\ve>0$ and $x\in T_\polytope(u)$. By the preceding paragraph, there is a position $\position$ with $\position(u)=x$, and a sequence of rolls $\mathcal{R}_x$, such that
    $$
    y:=(\mathcal{R}_x(\position))(v)
    $$
    satisfies
    $$
    0<|x-y|<\frac{\ve}{2}.
    $$
    Applying the same argument in the opposite direction, there is a sequence of rolls $\mathcal{R}_y$ such that
    $$
    z:=(\mathcal{R}_y(\mathcal{R}_x(\position)))(u)
    $$
    satisfies
    $$
    |z-y|<\frac12|x-y|.
    $$
    Consequently,
    $$
    0<|x-z|\leq |x-y|+|y-z|<\frac32|x-y|<\ve.
    $$
    Thus, $x$ is an accumulation point of $T_\polytope(u)$.
\end{proof}

    In what follows, let $T_v:=T_\polytope(v)$, where $v$ is the vertex given by Lemma~\ref{lem:samevertex}. The set $T_v$ has a strong self-similarity property: for every pair of points $p,q\in T_v$, there are two valid positions where $\position_1(v)=p$ and $\position_2(v)=q$. We may assume the face in the plane is the same for these two positions, otherwise, keeping $v$ fixed, roll from $\position_2$, through a sequence of edges of $\polytope$ adjacent to $v$, until a position in which the face in the plane is the same as that of $\position_1$ is reached. Now there is a rigid motion $f$ of the plane where $f(\position_1)=\position_2$. 
    
    As in the proof of Lemma~\ref{lem:samevertex}, we can also construct a sequence of rolls $\mathcal{R}$ such that $\mathcal{R}(\position_1)=\position_2$. So the valid positions of $\polytope$ are identical before and after applying $f$, hence $f(T_v)=T_v$. 
    The following lemma will help understand the structure of $T_v$, given this self-similarity. We write $\mathcal{E}(2)=\operatorname{Isom}(\RR^2)$ for the group of rigid motions of the plane.

\begin{lemma}
    \label{lem:classification}
    Let $S\subset \RR^2$ contain an accumulation point, and suppose that for every pair of points $p,q\in S$ there is a rigid motion $g_{p,q}$ where $g_{p,q}(p)=q$, and $g_{p,q}(S)=S$. Then $\closure S$ is either 
    \begin{enumerate}
        \item a union of parallel lines (possibly just one line), or
        \item a circle, or
        \item all of $\RR^2$.
    \end{enumerate}
\end{lemma}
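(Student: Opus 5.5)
We study the closed set $C:=\closure S$ through its symmetry group $G:=\{g\in\mathcal{E}(2): g(C)=C\}$. This is a closed subgroup of $\mathcal{E}(2)$: if $g_n\to g$ and $g_n(C)=C$, then $g(C)\subseteq C$ by continuity, and the same argument applied to $g_n^{-1}$ gives equality. By continuity each $g_{p,q}$ maps $C$ onto $C$, so $G$ acts transitively on $S$. Hence every orbit $G\cdot p$ with $p\in S$ contains $S$, and since $C$ is $G$-invariant, $G\cdot p\subseteq C$. Moreover $S$ has an accumulation point, so $G$ contains elements $g_{p,q_n}$ with $q_n\to p$ and $q_n\neq p$. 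The plan is to show that the identity component $G_0$ is a nontrivial connected closed subgroup of $\mathcal{E}(2)$, classify such subgroups, and then read off the shape of $C$.

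\textbf{Step 1 (nontrivial continuous part).} Take $p\in S$ that is an accumulation point of $S$; such a $p$ exists because, by transitivity, if one point of $S$ is an accumulation point then every point is. Choose $q_n\to p$ with $q_n \ne p$. The elements $h_n=g_{p,q_n}$ need not converge to the identity, since they may carry a rotational part. We therefore compose with stabilizer elements or pass to a subsequence: the set $\{h_n\}$ lies in a compact set, because $h_n(p)$ is bounded and the rotation angles lie in a compact set. Hence a subsequence satisfies $h_n\to h\in G$ with $h(p)=p$. Then $k_n:=h^{-1}h_n\to \mathrm{id}$, $k_n\in G$, and $k_n(p)=h^{-1}(q_n)\neq p$, so the $k_n$ are nontrivial. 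A closed subgroup of a Lie group is a Lie subgroup (Cartan), and a Lie group whose identity component is trivial is discrete. Since $G$ contains nontrivial elements $k_n\to\mathrm{id}$, $G$ is not discrete, so $\dim G\ge 1$ and $G_0\neq\{\mathrm{id}\}$.

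\textbf{Step 2 (classify $G_0$).} The connected closed subgroups of $\mathcal{E}(2)$ are the following: a one-parameter translation group $\{x\mapsto x+tw\}$; the rotation group $SO(2)$ about some center $c$; the full translation group $\RR^2$; and $\mathcal{E}(2)^+$. A screw-type one-parameter group $t\mapsto$ (rotation by $\alpha t$ composed with a translation) is conjugate to rotations about a point, so it falls under the second case. The only non-obvious possibility is a two-dimensional subgroup other than $\RR^2$, but the Lie algebra $\mathfrak{e}(2)$ has no such subalgebra: a subalgebra containing an element with a rotational component and any other independent element generates the whole algebra. This Lie-algebra computation is the main technical step, and I would carry it out explicitly with the bracket relations $[J,T_1]=T_2$, $[J,T_2]=-T_1$.

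\textbf{Step 3 (conclude).} Fix $p\in S$; then $C\supseteq \closure(G\cdot S)$, and $C=\closure S$ is a union of $G_0$-orbits closures in the following sense. Since $S$ is one $G$-orbit and $G_0$ is normal in $G$, the $G_0$-orbits of points of $S$ are permuted by $G$, so they are all congruent. We then treat the cases in turn.

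If $G_0\supseteq\RR^2$, then $C=\RR^2$.

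If $G_0$ is the translations along $w$, each point of $S$ lies on a line parallel to $w$ contained in $C$. Hence $C$ is a closed union of parallel lines.

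If $G_0$ is rotations about $c$, then the orbit of $p$ is a circle centered at $c$ of radius $r=|p-c|$. Every $g\in G$ normalizes $G_0$, so it fixes $c$, and hence all of $S$ lies at distance $r$ from $c$. If $r>0$ then $C$ is exactly this circle. The case $r=0$ is impossible, because $S$ would then be the single point $\{c\}$, which has no accumulation point.

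This yields the three alternatives.
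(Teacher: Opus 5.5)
Your proof is correct and follows essentially the same route as the paper. Both pass to a closed subgroup of $\mathcal{E}(2)$ acting transitively on $S$: you take the full symmetry group of $\closure S$, while the paper takes the closure of the group generated by the $g_{p,q}$. Both then rule out a trivial identity component using the accumulation point, and read off $\closure S$ from the classification of connected closed subgroups. Your version supplies details the paper leaves implicit: the compactness argument producing nontrivial elements near the identity, and the normality argument forcing all of $S$ onto a single circle. One small imprecision: for your larger $G$, the set $S$ need not be a full $G$-orbit; you only have $S\subseteq G\cdot p\subseteq \closure S$, which is all your argument uses.
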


\begin{proof}
    Let $G$ be the group generated by all the rigid motions $g_{p,q}$, and let $H=\closure G$ within $\mathcal{E}(2)$. For any given $x\in S$, we claim that 
    $$\closure S = H\circ x:=\set{h(x):h\in H}.$$ 
    Note that $S=G\circ x$.

    First, we see that $H\circ x$ is closed. Indeed, let $h_n$ be a sequence of rigid motions in $H$ such that $h_n(x) \to y$. These maps have the form $A_nx+b_n$ where $b_n\in \RR^2$ and $A_n$ is an orthogonal matrix in $\mathcal{O}(2)$. After passing to a subsequence, the $A_n$ converge to $A\in \mathcal{O}(2)$, and then $h_n(x)-A_nx=b_n$ converges as well. Hence, the subsequence converges to some $h\in H$ where $y=h(x)\in H\circ x$. 
    
    Since $S=G\circ x\subseteq H\circ x$ which is closed, $\closure S\subseteq H\circ x$. On the other hand, any $h\in H$ is a limit of elements in $G$, so 
    $h(x)\in \closure (G\circ x)=\closure S.$ 
    Let $H_0$ be the identity component of $H$. The closed connected subgroups of $\mathcal{E}(2)$ consist of the following items. $H_0$ could be
    \begin{enumerate}[label=(\alph*)]
        \item the trivial group $\set{e}$,
        \item a set of translations along one direction,
        \item a set of rotations around a fixed point,
        \item all translations, or
        \item all orientation-preserving rigid motions of the plane.
    \end{enumerate}
    Case (a) is not possible as $S$ contains an accumulation point, so $H$ has dimension at least 1. If cases (b) or (c) held, they would imply that $\closure S$ consists of a union of parallel lines or a circle respectively, and both cases (d) and (e) would imply that $\closure S = \RR^2$.
\end{proof}


In light of Lemma~\ref{lem:classification}, there are three cases for what $T_v$ may look like.
\case{1} The closure of $T_v$ is a circle $\gamma$; the rigid motions sending faces to faces are thus rotations about the center of the circle. When rolling the polytope around a given point $p\in T_v$, we necessarily find a second instance of the same face on the plane connected to $p$, and thus a distinct circular arc through $p$. This contradicts that the closure of $T_v$ is a circle.

    \begin{figure}[ht]
        \centering
        \includegraphics[width=0.95\linewidth]{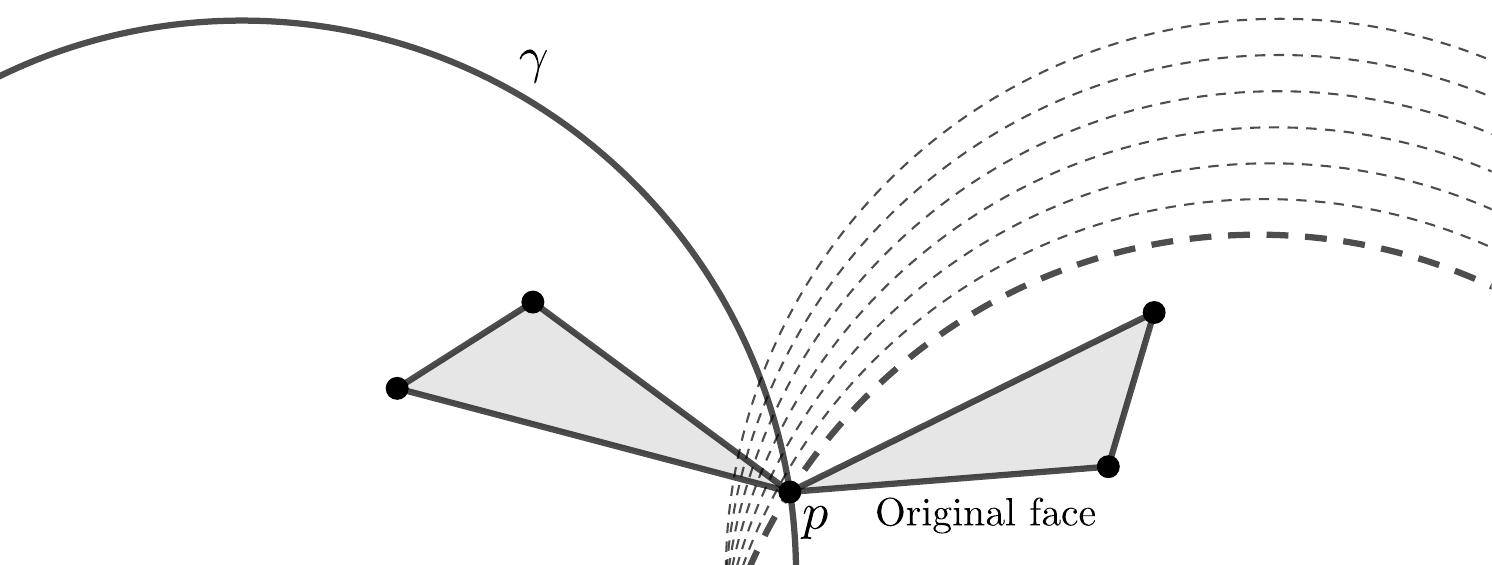}
        \caption{An area swept out by copies of $\gamma$}
        \label{fig:case1}
    \end{figure}
    
\case{2} $T_v$ is a line or a union of lines $\gamma$; the rigid motions sending faces to faces are thus parallel translations or 180 degree rotations. 
If, when rolled around a given point $p\in T_v$, we find a second face that is not at $180^{\circ}$ from the first one, we are done similarly to the previous case. This is because there is then a second line through $p$ skew to $\gamma$.

    \begin{figure}[ht]
        \centering
        \includegraphics[width=0.65\linewidth]{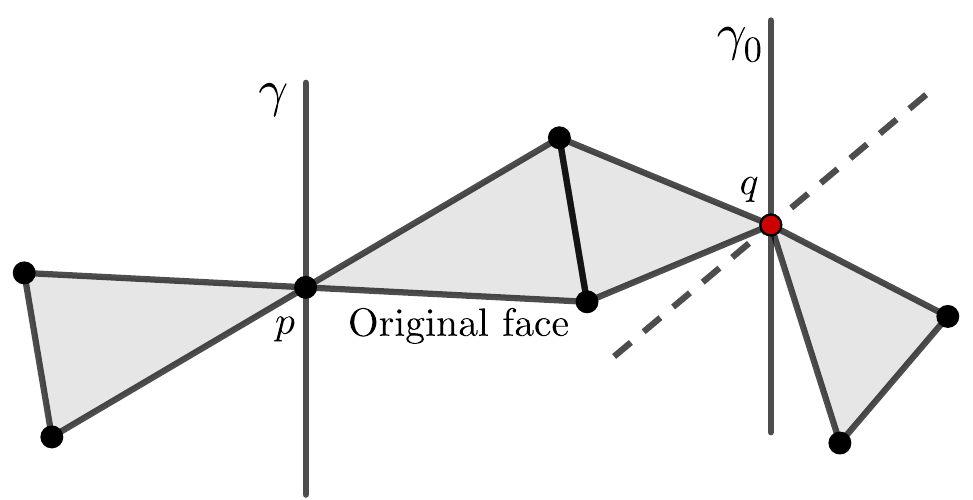}
        \caption{Finding another area swept out by copies of $\gamma$}
        \label{fig:case2}
    \end{figure}
    
Otherwise, the angles of the faces at $p$ must sum to $180^{\circ}$ exactly. In this case, choose a vertex $q$ which does not have a sum $180^{\circ}$ from its adjacent faces (there must be such a vertex, otherwise, $\polytope$ is a tetrahedron, whose trace is the triangular lattice and has no accumulation points). From every parallel face generating $T_v$, follow a common sequence of rolls until $q$ lies in the plane as in Figure~\ref{fig:case2}. Repeat the above argument on the set of all such $q$, which is dense in a translate $\gamma_0$ of $\gamma$.

\case{3} The closure of $T_v$ is $\RR^2$. Since $T_v\subseteq T_\polytope$, we are done.

\section{Proof of Theorem~\ref{thm:randomtrace}}
\label{sec:random-outline}

We first give a rough outline of the argument. In short, the proof works by fixing the initial supporting face $F_0$, and observing the random rolling process only at times when it returns to $F_0$. This becomes a random walk on a subgroup of the planar rigid motions, which is easier to analyze. 
For simplicity, we continue to denote the random trace from Theorem~\ref{thm:randomtrace} by $\randomtrace$, although the transition function may differ from the uniform one considered by Hegyv\'ari.
\vspace{2mm}

We first show a recurrence statement (Proposition~\ref{prop:recurrence}) for planar isometry walks. It states that a symmetric reversible sequence of random rigid motions with finite second moment is recurrent, in the sense that the $k^{\text{th}}$ displacement map $g_k$ is within each neighborhood of the identity map infinitely often.

To prove this, we show the existence of a suitable point $c\in\RR^2$, where the points $g_k(c)$ form a planar martingale (Lemma~\ref{lem:process_centre}) whose typical displacement is of order $\sqrt{k}$ (Lemma~\ref{lem:half-mass}). 

Next we prove that the probabilities of $g_k$ returning to a neighborhood of the identity, summed over $k$ diverge (Lemma~\ref{lem:divergent-green}). To do so, note that a ball of radius order $\sqrt{k}$ in $\mathcal{E}(2)$ can be covered by $O(k)$ translates of a fixed small neighborhood. Two independent copies of the walk with then land in the same neighborhood almost surely, and we leverage this to get a return probability of order at least $1/k$.

The strong Markov property turns the resulting divergent sum into almost-sure recurrence. This will conclude the proof of Proposition~\ref{prop:recurrence}, the main technical difficulty in the proof.
\vspace{3mm}

We now consider a process equivalent to our rolling procedure. At the $k^\text{th}$ time $\polytope$ returns to face $F_0$, the new placement of $F_0$ differs from its initial placement by a rigid motion $g_k$. The portions of the face process between successive returns to $F_0$ are independent and identically distributed. That is,
$g_k=h_1h_2\cdots h_k,$
where the $h_i$ are independent and identically distributed random rigid motions.

We then argue that the proposition applies here, starting by observing that the law of $h_1$ is symmetric. That is, every first-return sequence of faces and the same sequence followed in reverse have equal probabilities, while their rigid motions are inverses of one another. Moreover, the probability of a return excursion becomes exponentially smaller with its length. Since a single roll moves points on the polytope by a bounded distance, it follows that $\mathbb E|h_1(0)|^2<\infty,$ or, the second moment is finite.

Finally, fix any open disk $B$. Since $T_\polytope$ is dense, there is a finite rolling sequence $\mathcal{R}$ from $\position_0$ which places some vertex at a point $x\in B$. Whenever $g_k$ is sufficiently close to the identity, following $\mathcal{R}$ from the position at the $k^{\text{th}}$ return will place that vertex at $g_k(x)\in B$.
There are almost surely infinitely many such return times, and each gives a new trial with the same positive probability of following $\mathcal{R}$. Hence, one of the trials succeeds almost surely. Applying this to a countable basis of disks proves that $\randomtrace$ is almost surely dense.

\subsection{A recurrence result for planar isometry walks}
\label{sec:recurrence}

We first establish the recurrence statement needed for the random rolling argument. Recall that $\mathcal{E}(2)$ denotes the group of planar Euclidean isometries, and $\mathcal{O}(2)$ is the group of orthogonal matrices. This subsection is entirely dedicated to proving the following proposition, which says that any random walk in a closed subgroup of the isometries which is symmetric and has finite second moment is recurrent.

We note that Euclidean isometry recurrence has been well studied and for example, Corollary 1 in \cite{BaldiBougerolCrepel1978} is nearly the recurrence statement we needed. The differences are that they do not require symmetry, and ours applies to any closed subgroup. 

\begin{proposition}
\label{prop:recurrence}
Let $G$ be a closed subgroup of $\mathcal{E}(2)$, and let
$h_1,h_2,\ldots$ be independent and identically distributed $G$-valued random
variables. Assume that
$$
h_1\overset{d}=h_1^{-1}
\qquad\text{and}\qquad
\mathbb E\lvert h_1(0)\rvert^2<\infty.
$$
Define
$$
g_n=h_1h_2\cdots h_n,
\qquad
g_0=\operatorname{id}.
$$
Then, for every open neighborhood $U$ of the identity in $G$,
$$
\mathbb P(g_n\in U\text{ infinitely often})=1.
$$
\end{proposition}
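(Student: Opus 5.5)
We follow the outline given at the start of Section~\ref{sec:random-outline}. Write $h_i(x)=A_ix+b_i$ with $A_i\in\mathcal O(2)$. First we handle the rotational part. Since the closed subgroup $\overline{\langle \mathrm{supp}\,A_1\rangle}$ of the compact group $\mathcal O(2)$ carries the random walk $A_1\cdots A_n$, and a random walk on a compact group with symmetric step law is recurrent to every neighborhood of the identity, the rotational part causes no trouble by itself. The difficulty is controlling rotation and translation jointly.

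\textbf{Step 1 (martingale centre).} We aim to find a point $c\in\RR^2$ such that $X_n:=g_n(c)$ is a martingale. Write $g_n(c)=g_{n-1}(h_n(c))$, so that
$$
X_n-X_{n-1}=L_{n-1}\bigl(h_n(c)-c\bigr),
$$
where $L_{n-1}$ is the linear part of $g_{n-1}$, which is independent of $h_n$. Hence it suffices to have $\mathbb E[h_1(c)]=c$, i.e.
$$
(I-\mathbb E A_1)c=\mathbb E b_1 .
$$
If $I-\mathbb EA_1$ is invertible we solve for $c$ directly. Otherwise $\mathbb E A_1$ fixes some vector. In that case we use the symmetry $h_1\overset d=h_1^{-1}$: it gives $\mathbb E A_1=\mathbb E A_1^{T}$ and
$$
\mathbb E b_1=-\mathbb E A_1^{-1}b_1 .
$$
From these we check that $\mathbb E b_1$ lies in the range of $I-\mathbb EA_1$. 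In the extreme case $A_1\equiv I$ almost surely, symmetry gives $\mathbb E b_1=0$, and any $c$ works. The martingale increments have second moment bounded by $\mathbb E|h_1(0)|^2+O(|c|^2)<\infty$. Hence $\mathbb E|X_n-c|^2\le Cn$, and by Chebyshev, $\mathbb P(|X_n-c|\le K\sqrt n)\ge 1/2$ for a suitable constant $K$ (Lemma~\ref{lem:half-mass}).

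\textbf{Step 2 (divergent Green sum).} Fix a small symmetric neighborhood $V$ with $V^{-1}V\subseteq U$. The set
$$
\set{g\in G: |g(c)-c|\le K\sqrt n}
$$
is contained in a ball of $\mathcal E(2)$ which can be covered by $N_n=O(n)$ left translates $x_jV$. Here we use that $\mathcal O(2)$ is compact and that a disk of radius $K\sqrt n$ is covered by $O(n)$ unit disks. By Step 1 there is some $j$ with $\mathbb P(g_n\in x_jV)\ge 1/(2N_n)$. Now take two independent copies $g_n,g_n'$. By symmetry, $g_n^{-1}g_n'$ has the law of $g_{2n}$, because $g_n^{-1}\overset d= h_n^{-1}\cdots h_1^{-1}\overset d=g_n$. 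Therefore
$$
\mathbb P(g_{2n}\in U)\ \ge\ \sum_j \mathbb P(g_n\in x_jV)^2\ \ge\ \frac{1}{N_n}\Bigl(\sum_j \mathbb P(g_n\in x_jV)\Bigr)^2\ \ge\ \frac{c'}{n},
$$
where the middle inequality is Cauchy--Schwarz. Summing over $n$ gives $\sum_n\mathbb P(g_n\in U)=\infty$ (Lemma~\ref{lem:divergent-green}). The variable $h_1$ may be supported on a discrete subgroup $G$, and this is why we work with the closed group $G$ and the covering count rather than with densities.

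\textbf{Step 3 (almost sure recurrence).} We convert the divergent sum into infinitely many returns by the standard argument. Let $W$ be a neighborhood with $W^{-1}W\subseteq U$, and let $N$ be the number of visits of $g_n$ to $W$. Suppose $\mathbb P(\text{visit }W\text{ infinitely often})<1$. Applying the strong Markov property at successive first entrance times, we show that the expected number of visits to $W^{-1}W\subseteq U$ after the last visit to $W$ is finite. A covering argument then bounds $\sum_n\mathbb P(g_n\in W)$, contradicting Step 2 applied to $W$. We conclude that the walk visits $W$, and hence $U$, infinitely often with probability one. More precisely, we first show that the event $\{g_n\in U\text{ i.o.}\}$ has probability $0$ or $1$ via Hewitt--Savage/tail arguments. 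We then exclude probability $0$ using the divergent Green sum together with a compact-set version of the Chung--Fuchs dichotomy.

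I expect the main obstacle to be Step 1 in the degenerate case where $\mathbb EA_1$ has eigenvalue $1$. Here the martingale centre must be extracted from symmetry, and if no such $c$ exists one may need to project onto the invariant direction and treat that coordinate as a one-dimensional symmetric walk. A secondary obstacle is the strong Markov bookkeeping in Step 3, since the group is non-abelian and translates must be taken on the correct side.
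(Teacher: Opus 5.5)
Your plan follows the paper's route: martingale centre, $O(\sqrt n)$ spread, $O(n)$ left translates, two independent copies with Cauchy--Schwarz, then a renewal argument. However, Steps 1 and 3 are not actually proved. In Step 1, the two consequences of symmetry you list ($\mathbb E A_1$ symmetric, $\mathbb E b_1=-\mathbb E A_1^{T}b_1$) do not by themselves put $\mathbb E b_1$ in the range of $I-\mathbb E A_1$. The missing observation is that $(\mathbb E A_1)v=v$ forces $A_1v=v$ almost surely. Indeed, $\lvert v\rvert^2=\mathbb E\langle A_1v,v\rangle$, while $\langle A_1v,v\rangle\le\lvert v\rvert^2$ pointwise with equality only if $A_1v=v$. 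Then $\langle\mathbb E b_1,v\rangle=-\mathbb E\langle b_1,A_1v\rangle=-\langle\mathbb E b_1,v\rangle$, so $\mathbb E b_1\perp\ker(I-\mathbb E A_1)$. For the symmetric matrix $\mathbb E A_1$ this is exactly the range condition. So the centre always exists, and the fallback to a projected one-dimensional walk is never needed. A smaller point in Step 2: the inequality $\mathbb P(g_{2n}\in U)\ge\sum_j\mathbb P(g_n\in x_jV)^2$ needs disjoint pieces. With overlapping translates it can fail, so first partition the ball into measurable sets $C_j\subseteq x_jV$, as the paper does.

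Step 3 is the real gap. Hewitt--Savage does not apply because $G$ is non-commutative. Permuting $h_1,\ldots,h_k$ replaces every $g_n$ with $n\ge k$ by $zg_n$ for a fixed element $z$ that is generally not the identity. So $\{g_n\in U\text{ i.o.}\}$ is neither exchangeable nor a tail event. Nor does a divergent sum $\sum_n\mathbb P(g_n\in U)$ by itself give $\mathbb P(g_n\in U\text{ i.o.})>0$. This is exactly where the Markov structure must be used, and your first formulation (finite expected visits to $W^{-1}W$ after the last visit to $W$) does not do it.

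The argument that works goes by contradiction. Suppose $p=\mathbb P(\exists n\ge1:g_n\in U)<1$, and choose $V$ with $V^{-1}V\subseteq U$. Let $T_j$ be the $j$-th visit time to $V$, with $g_{T_j}=x\in V$. By the strong Markov property, $g_{T_j}^{-1}g_{T_j+n}$ is a fresh copy of $(g_n)$ independent of $\mathcal F_{T_j}$. A further visit requires this fresh walk to enter $x^{-1}V\subseteq V^{-1}V\subseteq U$. Hence $\mathbb P(T_{j+1}<\infty\mid\mathcal F_{T_j})\le p$, so the expected number of visits, $\sum_n\mathbb P(g_n\in V)$, is at most $1/(1-p)$. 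This contradicts Step 2 applied to $V$. Return to every identity neighbourhood is therefore certain. Finally, from $x\in W$ a return to $W$ means the fresh walk hits the identity neighbourhood $x^{-1}W$. Iterating the strong Markov property then shows that all successive return times to $W$ are almost surely finite.
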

We first establish a few lemmas. Write each $g\in \mathcal{E}(2)$ as $g(z)=A_gz+b_g,$ where $A_g\in \mathcal{O}(2)$ and $b_g=g(0)\in\RR^2$. 
The first lemma gives a special point, where the expected shift of that point under $h_1$ is the point itself.

\begin{lemma}
    \label{lem:process_centre}
    Under the conditions of Proposition~\ref{prop:recurrence}, there is a point $c\in\RR^2$ satisfying
\begin{equation}
\mathbb E[h_1(c)]=c.
\label{eq:center}
\end{equation}
\end{lemma}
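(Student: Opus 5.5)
Write each $h_1$ as $h_1(z)=Az+b$, with $A=A_{h_1}\in\mathcal{O}(2)$ and $b=b_{h_1}=h_1(0)$. Equation~\eqref{eq:center} then reads
$$(I-M)c=\mathbb E[b],\qquad M:=\mathbb E[A].$$
Both expectations exist: $A$ is bounded, and $\mathbb E|b|\le(\mathbb E|b|^2)^{1/2}<\infty$ by the second moment assumption. So the task is to show that $\mathbb E[b]$ lies in the range of the linear map $I-M$. I would do this by combining the symmetry hypothesis $h_1\overset{d}=h_1^{-1}$ with linear algebra for a symmetric matrix.

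\emph{Step 1: $M$ is symmetric.} We have $h_1^{-1}(z)=A^{-1}z-A^{-1}b$. The hypothesis $h_1\overset{d}=h_1^{-1}$ therefore says that the pair $(A,b)$ has the same law as $(A^{-1},-A^{-1}b)$. Taking expectations of the first coordinate gives $M=\mathbb E[A^{-1}]=\mathbb E[A^{T}]=M^{T}$. Hence $I-M$ is symmetric, and its range is the orthogonal complement of its kernel. It therefore suffices to show that $v\cdot\mathbb E[b]=0$ for every $v$ with $Mv=v$.

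\emph{Step 2: describe the kernel.} Suppose $\mathbb E[Av]=v$. Since $A$ is orthogonal, $|Av|=|v|$ almost surely. So $v$ is an average of vectors lying on the circle of radius $|v|$. By strict convexity of the Euclidean norm, in the form $|v|^2=\mathbb E[Av\cdot v]\le |v|^2$ with equality forcing $Av\cdot v=|v|^2$, this can only happen if $Av=v$ almost surely. This is the one point needing a small argument, and it is where I expect the only real content to lie.

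\emph{Step 3: orthogonality.} Take $v$ with $Av=v$ almost surely. Then also $A^{T}v=A^{-1}v=v$, so $v\cdot A^{-1}b=(A^{-T}v)\cdot b=(Av)\cdot b=v\cdot b$ almost surely. Now use the symmetry on the second coordinate, $\mathbb E[b]=\mathbb E[-A^{-1}b]$. This gives
$$v\cdot\mathbb E[b]=-\mathbb E[v\cdot A^{-1}b]=-v\cdot\mathbb E[b],$$
so $v\cdot\mathbb E[b]=0$.

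Therefore $\mathbb E[b]\in(\ker(I-M))^{\perp}=\operatorname{range}(I-M)$, and any preimage $c$ satisfies~\eqref{eq:center}. In the degenerate case where $A=I$ almost surely (a pure translation walk), the argument gives $\mathbb E[b]=0$, and any $c$ works, consistent with the general statement.
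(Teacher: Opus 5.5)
Your proof is correct and follows the paper's argument. You reduce to solving $(I-\overline A)c=\overline b$, use equality in Cauchy--Schwarz to show that a fixed vector $v$ of the averaged matrix satisfies $A_{h_1}v=v$ almost surely, and then combine $b_{h_1^{-1}}=-A_{h_1}^{T}b_{h_1}$ with $h_1\overset{d}=h_1^{-1}$ to get $\langle\overline b,v\rangle=-\langle\overline b,v\rangle$. The only differences are minor: you first prove $\overline A$ is symmetric so you can work with $\ker(I-\overline A)$, whereas the paper uses the criterion $\overline b\perp\ker(I-\overline A^{T})$ directly; and you check integrability of $b_{h_1}$, which the paper leaves implicit.
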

\begin{proof}
Put
$$
\overline A=\mathbb E[A_{h_1}],
\qquad
\overline b=\mathbb E[b_{h_1}].
$$
Then equation~\eqref{eq:center} is equivalent to
$$
(I-\overline A)c=\overline b,
$$
which is solvable if and only if
\begin{equation}
\label{eq:perpendicular}
\overline b\perp\ker(I-\overline A^{T}).
\end{equation}
Take $v\in\ker(I-\overline A^{T})$. Then $\overline A^{T}v=v$, and hence
$$
\lvert v\rvert^2
 =\langle\overline A^{T}v,v\rangle
 =\mathbb E\langle A_{h_1}^{T}v,v\rangle
 \leq \lvert v\rvert^2.
$$
The integrand is at most $\lvert v\rvert^2$, so equality of the expectations forces
$\langle A_{h_1}^{T}v,v\rangle=\lvert v\rvert^2$ almost surely. Since
$A_{h_1}$ is orthogonal, equality in Cauchy--Schwarz gives $A_{h_1}^{T}v=v$ almost surely, and, therefore, also $A_{h_1}v=v$ almost surely.

For every $g\in \mathcal{E}(2)$,
$$
b_{g^{-1}}=-A_g^{T}b_g.
$$
Using the symmetry $h_1\overset d=h_1^{-1}$, we obtain
$$
\begin{aligned}
\langle\overline b,v\rangle
 &=\mathbb E\langle b_{h_1},v\rangle
  =\mathbb E\langle b_{h_1^{-1}},v\rangle \\
 &=\mathbb E\langle-A_{h_1}^{T}b_{h_1},v\rangle
  =-\mathbb E\langle b_{h_1},A_{h_1}v\rangle \\
 &=-\mathbb E\langle b_{h_1},v\rangle
  =-\langle\overline b,v\rangle.
\end{aligned}
$$
Thus, $\langle\overline b,v\rangle=0$, which proves \eqref{eq:perpendicular} and, therefore, the existence of a vector $c\in\RR^2$ satisfying \eqref{eq:center}.
\end{proof}

Let $\lVert\cdot\rVert$ denote the Frobenius norm on $2\times2$ matrices, and define for any two rigid motions $g,h\in \mathcal{E}(2)$,
$$
\rho(g,h)=\lvert g(0)-h(0)\rvert+\lVert A_g-A_h\rVert.
$$
This metric induces the usual topology on $\mathcal{E}(2)$. It is left-invariant: for
$k,g,h\in \mathcal{E}(2)$, orthogonality of $A_k$ gives
$$
\rho(kg,kh)
 =\lvert A_k(g(0)-h(0))\rvert
  +\lVert A_k(A_g-A_h)\rVert
 =\rho(g,h).
$$
The next lemma will control how far from the identity the rigid motions $g_n$ are. 

\begin{lemma}
\label{lem:half-mass}
    Under the conditions of Proposition~\ref{prop:recurrence}, there is a constant $C$ such that, for every $n\geq1$,
    \begin{equation}
        \label{eq:half-mass}
        \mathbb P\bigl(\rho(g_n,\operatorname{id})\leq C\sqrt n\bigr)\geq\frac12.
    \end{equation}
\end{lemma}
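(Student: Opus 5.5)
The plan is to bound $\rho(g_n,\operatorname{id})$ by a deterministic term plus $|g_n(c)-c|$, where $c$ is the point from Lemma~\ref{lem:process_centre}. Then $g_n(c)$ should be a martingale with square-integrable increments, so its second moment grows linearly, and Chebyshev finishes.

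\textbf{Reduction to $c$.} The rotational part satisfies $\lVert A_{g_n}-I\rVert\le 2\sqrt2$ always. For the translational part, write
$$
g_n(0)=g_n(c)-A_{g_n}c,
$$
which gives
$$
|g_n(0)|\le |g_n(c)-c|+2|c|.
$$
Hence $\rho(g_n,\operatorname{id})\le |g_n(c)-c|+K$ with $K=2|c|+2\sqrt2$. It therefore suffices to show that $\mathbb E|g_n(c)-c|^2\le Dn$ for some constant $D$.

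\textbf{Martingale structure.} Put $X_k=g_k(c)$, so $X_0=c$, and let $\mathcal F_k=\sigma(h_1,\dots,h_k)$. The increment is
$$
X_{k+1}-X_k=g_k(h_{k+1}(c))-g_k(c)=A_{g_k}\bigl(h_{k+1}(c)-c\bigr).
$$
Since $h_{k+1}$ is independent of $\mathcal F_k$ and $\mathbb E[h_{k+1}(c)-c]=0$ by \eqref{eq:center}, we get $\mathbb E[X_{k+1}-X_k\mid\mathcal F_k]=0$, using that $A_{g_k}$ is $\mathcal F_k$-measurable.

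Because $A_{g_k}$ is orthogonal, $|X_{k+1}-X_k|=|h_{k+1}(c)-c|$. Also
$$
|h(c)-c|\le |h(0)|+|A_hc-c|\le |h(0)|+2|c|,
$$
so the finite second moment hypothesis gives $\sigma^2:=\mathbb E|h_1(c)-c|^2<\infty$.

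Orthogonality of martingale increments then yields
$$
\mathbb E|X_n-c|^2=\sum_{k<n}\mathbb E|X_{k+1}-X_k|^2=n\sigma^2.
$$
The cross terms vanish: for $j<k$, condition on $\mathcal F_k$, and note that both increments are square integrable.

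\textbf{Conclusion.} Chebyshev's inequality gives
$$
\mathbb P\bigl(|X_n-c|>\sqrt{2}\,\sigma\sqrt n\bigr)\le \tfrac12.
$$
So with probability at least $\tfrac12$,
$$
\rho(g_n,\operatorname{id})\le \sqrt2\,\sigma\sqrt n+K\le (\sqrt2\,\sigma+K)\sqrt n
$$
for all $n\ge1$. Taking $C=\sqrt2\,\sigma+K$ proves \eqref{eq:half-mass}.

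\textbf{Main obstacle.} The only delicate point is the correct order of composition. One must check that $g_{k+1}=g_kh_{k+1}$ makes the new randomness act inside, via $g_k(h_{k+1}(c))$, so that the increment is the fixed-law vector $h_{k+1}(c)-c$ rotated by an $\mathcal F_k$-measurable orthogonal matrix. This is exactly why centering at $c$ via Lemma~\ref{lem:process_centre} produces a martingale, whereas $g_k(0)$ itself need not be one.
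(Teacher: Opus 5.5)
Your proposal is correct and takes essentially the same route as the paper: the martingale $g_n(c)$ with $c$ from Lemma~\ref{lem:process_centre}, the identity $\mathbb E|g_n(c)-c|^2=n\sigma^2$, the bound $|g_n(0)|\le 2|c|+|g_n(c)-c|$, and a Chebyshev/Markov step. The only cosmetic difference is that you apply Chebyshev to $|g_n(c)-c|$ and add the deterministic constant afterwards, whereas the paper first bounds $\mathbb E[\rho(g_n,\operatorname{id})^2]=O(n)$; your version has the small bonus of an explicit constant $C=\sqrt2\,\sigma+2|c|+2\sqrt2$.
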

\begin{proof}
Let $c$ be the point given by Lemma~\ref{lem:process_centre}, and let $\mathcal F_n$ be the sigma algebra generated by $(h_1,\ldots,h_n)$ and define
$$
M_n=g_n(c).
$$
Since $g_n$ is affine and $h_{n+1}$ is independent of $\mathcal F_n$,
$$
\mathbb E[M_{n+1}\mid\mathcal F_n]
 =g_n\bigl(\mathbb E[h_{n+1}(c)]\bigr)
 =g_n(c)
 =M_n.
$$
Thus, $M_n$ is a square-integrable $\RR^2$-valued martingale. Moreover,
$$
M_{n+1}-M_n=g_n(h_{n+1}(c))-g_n(c),
$$
so because $g_n$ is an isometry,
$$
\lvert M_{n+1}-M_n\rvert=\lvert h_{n+1}(c)-c\rvert.
$$

Set
$$
\sigma^2 = \mathbb E\lvert h_1(c)-c\rvert^2<\infty,
$$
where finiteness follows from
$\lvert h_1(c)-c\rvert\leq\lvert h_1(0)\rvert+2\lvert c\rvert$.
The martingale differences are orthogonal in $L^2$ and, therefore,
\begin{equation}
\mathbb E\lvert M_n-c\rvert^2=n\sigma^2.
\label{eq:martingale-second-moment}
\end{equation}
Also, again using that $g_n$ is an isometry,
$$
\begin{aligned}
\lvert g_n(0)\rvert
 &\leq \lvert g_n(0)-g_n(c)\rvert
       +\lvert g_n(c)-c\rvert+\lvert c\rvert \\
 &=2\lvert c\rvert+\lvert M_n-c\rvert.
\end{aligned}
$$
It follows from \eqref{eq:martingale-second-moment} that
\begin{equation}
\mathbb E\lvert g_n(0)\rvert^2=O(n).
\label{eq:translation-second-moment}
\end{equation}
Since $\mathcal{O}(2)$ is bounded, \eqref{eq:translation-second-moment} implies
\begin{equation}
\mathbb E\bigl[\rho(g_n,\operatorname{id})^2\bigr]=O(n).
\label{eq:rho-second-moment}
\end{equation}
The claim in the lemma now follows from~\eqref{eq:rho-second-moment} and Markov's inequality.
\end{proof}

We will now use the preceding lemma to show that the probabilities of $g_n$ coming arbitrarily close to the identity map diverge when summed over $n$. We use the term \emph{identity neighborhood} to mean an open neighborhood of the identity map $\operatorname{id}$ in $G$.

\begin{lemma}
    \label{lem:divergent-green}
    Let $g_n$ and $G$ be defined as in Proposition~\ref{prop:recurrence}. Fix an identity neighborhood $U\subseteq G$. Then
    \begin{equation}
    \sum_{n=0}^{\infty}\mathbb P(g_n\in U)=\infty.
    \label{eq:divergent-green}
    \end{equation}
\end{lemma}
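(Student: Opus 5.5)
We follow the outline in Section~\ref{sec:random-outline} and aim for the bound $\mathbb P(g_{2n}\in U)\geq c_0/n$ for all large $n$ and some $c_0>0$. Since $\sum 1/n$ diverges, this gives \eqref{eq:divergent-green}.

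First, shrink $U$. Using the left-invariant metric $\rho$ restricted to $G$, choose $\delta>0$ with $V:=\set{g\in G:\rho(g,\operatorname{id})<\delta}\subseteq U$. The key identity is that symmetry and independence let us write $g_{2n}$ as a "difference" of two independent copies. Put $g_n=h_1\cdots h_n$ and $g_n'=h_{2n}^{-1}h_{2n-1}^{-1}\cdots h_{n+1}^{-1}$. Because $h_i\overset d=h_i^{-1}$, the variable $g_n'$ is independent of $g_n$ and has the same law as $g_n$. Moreover $g_{2n}=g_n(g_n')^{-1}$. By left-invariance,
$$\rho(g_{2n},\operatorname{id})=\rho(g_n,g_n').$$
It therefore suffices to show $\mathbb P(\rho(g_n,g_n')<\delta)\geq c_0/n$.

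For this, cover $B_n:=\set{g\in G:\rho(g,\operatorname{id})\leq C\sqrt n}$ by sets $W_1,\dots,W_{N_n}$ of $\rho$-diameter less than $\delta$. We need $N_n\leq K n$ with $K$ independent of $n$. This is a covering estimate in $\mathcal E(2)\cong \RR^2\times\mathcal O(2)$. The translation part lies in a disk of radius $C\sqrt n$, which is covered by $O(n/\delta^2)$ squares of side about $\delta/4$. The compact set $\mathcal O(2)$ is covered by $O(1/\delta)$ arcs of Frobenius diameter below $\delta/2$. Products of these cells, intersected with $G$, have $\rho$-diameter less than $\delta$, so $N_n=O(n)$ for fixed $\delta$. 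Here it is important to cover inside the ambient $\mathcal E(2)$ and then intersect with $G$. Working this way avoids any use of the structure of $G$, whose discrete or lower-dimensional nature might otherwise cause trouble.

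Next apply Cauchy--Schwarz. Lemma~\ref{lem:half-mass} gives $\mathbb P(g_n\in B_n)\geq\frac12$. Let $p_j=\mathbb P(g_n\in W_j)$; then $\sum_j p_j\geq \frac12$. If $g_n$ and $g_n'$ fall in the same $W_j$, then $\rho(g_n,g_n')<\delta$. By independence and equality of laws,
$$\mathbb P(\rho(g_n,g_n')<\delta)\geq \sum_j p_j^2\geq \frac{(\sum_j p_j)^2}{N_n}\geq\frac{1}{4Kn}.$$
Hence $\mathbb P(g_{2n}\in U)\geq 1/(4Kn)$, and summing over $n$ gives \eqref{eq:divergent-green}.

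I expect the main obstacle to be checking that the reversed product $g_n'$ really is an independent copy of $g_n$ with the same law. The reversal of order must be handled carefully: $(h_{2n}^{-1},\dots,h_{n+1}^{-1})$ is an i.i.d. sequence distributed like $(h_1,\dots,h_n)$, which is exactly what makes $g_n'$ work. The rest is the routine covering count.
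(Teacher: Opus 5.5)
\textbf{There is a gap.} The step that fails is ``By left-invariance, $\rho(g_{2n},\operatorname{id})=\rho(g_n,g_n')$.'' Left-invariance only gives $\rho(g_n,g_n')=\rho(\operatorname{id},g_n^{-1}g_n')$. Your decomposition, however, is $g_{2n}=g_n(g_n')^{-1}$. Equating $\rho(g_n(g_n')^{-1},\operatorname{id})$ with $\rho(g_n,g_n')$ would need right-invariance, which $\rho$ does not have.

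Concretely, write $g_n=(A,b)$ and $g_n'=(A',b')$. Then $g_n(g_n')^{-1}$ has rotation part $AA'^{T}$ and translation part $b-AA'^{T}b'=(b-b')+(I-AA'^{T})b'$. The last term has length comparable to $\lVert A-A'\rVert\,\lvert b'\rvert$, which on $B_n$ can be of order $\delta C\sqrt n$. So whenever the rotation parts in $G$ are not isolated (e.g.\ $G=\mathcal{E}(2)$), two walks in the same one of your cells do not force $g_{2n}\in V$ once $n$ is large. Refining the cells does not rescue this pairing. Shrinking your arcs to size about $\delta/(C\sqrt n)$ raises $N_n$ to order $n^{3/2}$, and a lower bound of order $n^{-3/2}$ is summable.

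\textbf{The fix.} Compare the other pair. By left-invariance, $\rho(g_n(g_n')^{-1},\operatorname{id})=\rho((g_n')^{-1},g_n^{-1})$. The variables $g_n^{-1}=h_n^{-1}\cdots h_1^{-1}$ and $(g_n')^{-1}=h_{n+1}\cdots h_{2n}$ are independent and each is distributed as $g_n$; for the first this uses $h_1\overset d=h_1^{-1}$. Hence each lies in $B_n$ with probability at least $\frac12$.

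Now run your Cauchy--Schwarz step on this pair. Choose the cells pairwise disjoint (half-open grid squares times half-open arcs), so that the probability of landing in a common cell really equals $\sum_j p_j^2$; your cover as stated does not guarantee this. You then get $\mathbb P(g_{2n}\in V)\geq 1/(4Kn)$.

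This repaired argument is exactly the paper's. The paper writes $g_{2n}\overset d=g_n^{-1}g_n'$ with $g_n'$ an independent copy, using $g_n^{-1}\overset d=g_n$. A common cell $C_j\subseteq f_jV$ then gives $g_n^{-1}g_n'\in V^{-1}V\subseteq U$ directly, with the order of multiplication matched to left translates.
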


\begin{proof}
Choose an identity neighborhood
$V\subseteq G$ such that $V^{-1}V\subseteq U.$
Let
$$
B_n=\{g\in G:\rho(g,\operatorname{id})\leq C\sqrt n\},
$$
where $C$ is the constant given by Lemma~\ref{lem:half-mass}. We claim that $B_n$ can be covered by at most $Kn$ left translates of $V$, where
$K$ is a constant independent of $n$. To see this, choose $\ve>0$ such that
$$
\{g\in G:\rho(g,\operatorname{id})<\ve\}\subseteq V.
$$
Under the correspondence $g\leftrightarrow(b_g,A_g)\in\RR^2\times \mathcal{O}(2)$, the set $B_n$ has $b_g$ in a disk of radius $C\sqrt n$ in $\RR^2$, while $A_g$ lies in the compact space $\mathcal{O}(2)$. The translation disk can be covered by $O_{C,\ve}(n)$ sets of diameter at most $\ve/2$, and $\mathcal{O}(2)$ can be covered by $O_\ve(1)$ sets of diameter at most $\ve/2$. Their products, therefore, cover $B_n$ by $O_{C,\ve}(n)$ sets of $\rho$-diameter less than $\ve$. Choosing an element $f_j\in G$ from each nonempty such set shows that the set is contained in
$$
f_j\{g\in G:\rho(g,\operatorname{id})<\ve\}\subseteq f_jV.
$$
Consequently, for some constant $K=K(C,\ve)$,
$$
B_n\subseteq\bigcup_{j=1}^{N_n}f_jV,
\hspace{2mm}
\text{for an integer }N_n\leq Kn.
$$

Partition $B_n$ into measurable sets $C_1,\ldots,C_{N_n}$ satisfying
$C_j\subseteq f_jV$, and put
$$
p_j=\mathbb P(g_n\in C_j).
$$
By Lemma~\ref{lem:half-mass},
$$
\sum_{j=1}^{N_n}p_j\geq\frac12.
$$
Let $g_n'$ be an independent copy of $g_n$. If $g_n$ and $g_n'$ lie in the same set $C_j$, then
$$
g_n^{-1}g_n'\in V^{-1}V\subseteq U.
$$
Hence, by Cauchy--Schwarz,
$$
\begin{aligned}
\mathbb P(g_n^{-1}g_n'\in U)
 &\geq\sum_{j=1}^{N_n}p_j^2
 \geq\frac{\left(\sum_jp_j\right)^2}{N_n}
 \geq\frac{1}{4Kn}.
\end{aligned}
$$

Now,
$$
g_n^{-1}=h_n^{-1}\cdots h_1^{-1}.
$$
Because the increments are i.i.d.\ and symmetric, $g_n^{-1}$ has the same law as
$g_n$. It is independent of $g_n'$, so $g_n^{-1}g_n'$ has the same law as $g_{2n}$. Therefore, for $K'=\frac{1}{4K}>0$,
\begin{equation}
\mathbb P(g_{2n}\in U)\geq\frac{K'}{n}.
\label{eq:return-lower-bound}
\end{equation}
Summing this equation over $n$ is divergent, and equation~\eqref{eq:divergent-green} follows.
\end{proof}

It only remains to leverage Lemma~\ref{lem:divergent-green} into almost-sure recurrence. 

\begin{proof}[Proof of Proposition~\ref{prop:recurrence}]
Suppose, toward a
contradiction, that for some identity neighborhood $U\subseteq G$,
$$
p:=\mathbb P(\exists n\geq1:g_n\in U)<1.
$$
Choose an identity neighborhood $V$ with $V^{-1}V\subseteq U$. Let
$T_1<T_2<\cdots$ be the successive visit times to $V$, with $T_1=0$, since
$g_0=\operatorname{id}\in V$. On the event $\{T_j<\infty\}$, write $g_{T_j}=x\in V$. By the strong Markov property, the future steps in the walk have the same law as $(g_n)_{n\geq0}$, independently of the past (see, for instance, \cite[Section A.3.]{LevinPeres2017} for a description of the strong Markov property). A return to $V$ requires that the future walk enters
$$
x^{-1}V\subseteq V^{-1}V\subseteq U.
$$
Thus,
$$
\mathbb P(T_{j+1}<\infty\mid\mathcal F_{T_j})\leq p
\qquad\text{on }\{T_j<\infty\},
$$
and induction gives
$$
\mathbb P(T_j<\infty)\leq p^{j-1}.
$$
Therefore, the expected total number of visits to $V$ is at most
$$
\sum_{j=1}^{\infty}p^{j-1}<\infty.
$$

On the other hand, the expected total number of visits to $V$ is precisely the expectation of the sum of indicator functions
$$
\mathbb E\left[\sum_{m=1}^\infty \chi_{\set{g_m\in V}} \right] = 
\sum_{m=1}^\infty \mathbb E[\chi_{\set{g_m\in V}}]
= \sum_{m=0}^{\infty}\mathbb P(g_m\in V),
$$
which is infinite by Lemma~\ref{lem:divergent-green} applied to $V$. This contradiction shows
that, for every identity neighborhood $U\subseteq G$,
\begin{equation}
\mathbb P(\exists n\geq1:g_n\in U)=1.
\label{eq:one-return}
\end{equation}

Finally, fix an identity neighborhood $W\subseteq G$. Starting from any point $x\in W$, a future return to $W$ is equivalent to the relative walk entering $x^{-1}W$, which is again an identity neighborhood. By \eqref{eq:one-return}, this happens with probability one. Defining the successive return times to $W$ recursively and applying the strong Markov property at each one shows that all of them are finite almost
surely. Hence,
$$
\mathbb P(g_n\in W\text{ infinitely often})=1.
$$
\end{proof}

\subsection{Applying the proposition to the rolling polytope}\label{sec:return-times}

In this section we convert our random polytope rolling process into a scheme satisfying the conditions of Proposition~\ref{prop:recurrence}. Let $F_0$ be the labeled face on which $\polytope$ initially rests. Choose a fixed point $a\in F_0$, and choose planar coordinates so that
$\position_0(a)=0$. 

Let \(X_n\) be the labeled face on which \(\polytope\) rests after \(n\) rolls. Then \((X_n)\) is
a Markov chain on the finite, connected dual graph of \(\polytope\) with transition probabilities given by $\tran$. Define the successive return times to the initial face by
$$
\tau_0=0,
\qquad
\tau_{k+1}=\inf\{n>\tau_k:X_n=F_0\}.
$$
Since the state space is finite and the chain is irreducible, every $\tau_k$ is finite
almost surely.

At time $\tau_k$, the same labeled face $F_0$ again lies in the plane. Its current placement differs from its initial placement by a unique planar isometry. Denote this isometry by
$g_k\in \mathcal{E}(2)$, with $g_0=\operatorname{id}.$ Define the relative increment of the $(k+1)$-st excursion by
$$
h_{k+1}=g_k^{-1}g_{k+1}.
$$
The face excursions between successive returns to $F_0$ are independent and identically distributed by the strong Markov property. Rolling is equivariant under planar isometries, so the relative isometry produced by an excursion is determined by its labeled face path. Consequently, $h_1,h_2,\ldots$ are independent and identically distributed.


Consider a first-return excursion
$$
\gamma=(F_0,F_1,\ldots,F_{m-1},F_m=F_0),
$$
with no intermediate visit to $F_0$. Its probability is
$$
\mathbb P(\gamma)=\prod_{i=0}^{m-1}\tran(F_i,F_{i+1})
$$
The reversed excursion is
$$
\gamma^{-1}=(F_0,F_{m-1},\ldots,F_1,F_0),
$$
and it has the probability
$$
\prod_{i=0}^{m-1}\tran(F_{i+1},F_i)=\prod_{i=0}^{m-1}\frac{\pi(F_i)}{\pi(F_{i+1})}\tran(F_i,F_{i+1})=\frac{\pi(F_0)}{\pi(F_{m})}\prod_{i=0}^{m-1}\tran(F_i,F_{i+1}),
$$
which is identical since $F_0=F_m$. Each
roll in the reversed excursion is the inverse of the corresponding roll in the original
excursion, in reverse order. Thus, if $\gamma$ produces the relative isometry $h$, then
$\gamma^{-1}$ produces $h^{-1}$. It follows that
\begin{equation}
h_1\overset d=h_1^{-1}.
\label{eq:increment-symmetry}
\end{equation}


Let $L=\tau_1$ be the length of the first return excursion. There exist constants $C$ and $r$ such that $\mathbb P(L>n)\leq Cr^n$. Indeed, because the face chain is finite and irreducible, there exist an integer $M\geq1$ and a number $p>0$ such that, from every face, there is probability at least $p$ of visiting $F_0$ within the next $M$ steps; for the starting state $F_0$, one uses a fixed positive-length return path (such as, rolling over an edge and then back). Applying the Markov property in successive blocks of length $M$ gives
\begin{equation}
\label{eq:exponential_tail}
\mathbb P(L>jM)\leq(1-p)^j,
\qquad j\geq0.
\end{equation}

There is a constant $D_\polytope<\infty$, depending only on the diameter of $\polytope$, such that one roll moves any
fixed material point of $\polytope$ by Euclidean distance at most $D_\polytope$. During an excursion of
length $L$, the chosen material point $a$, therefore, moves by total distance at most
$D_\polytope L$. At the end of the excursion this point lies in the supporting plane at the
position $h_1(0)$, and hence
$$
\lvert h_1(0)\rvert\leq D_\polytope L.
$$
Equation~\eqref{eq:exponential_tail} now yields
\begin{equation}
\mathbb E\lvert h_1(0)\rvert^2<\infty.
\label{eq:increment-second-moment}
\end{equation}

Let $\mu$ be the law of $h_1$, and define the deterministic closed subgroup
$$
G=\overline{\langle\operatorname{supp}\mu\rangle}\subseteq \mathcal{E}(2).
$$
Note that $\operatorname{supp}\mu$ is the \emph{support} of $\mu$; the set of isometries that can occur or be approximated by isometries that occur with positive probability, $\langle\operatorname{supp}\mu \rangle$ denotes the group generated by those isometries, and we then take the closure to form $G$. Every $h_k$ is $G$-valued almost surely. Let $g_k=h_1h_2\cdots h_k.$
By \eqref{eq:increment-symmetry}, \eqref{eq:increment-second-moment}, and
Proposition~\ref{prop:recurrence}, for every identity neighborhood $U\subseteq G$,
\begin{equation}
\mathbb P(g_k\in U\text{ infinitely often})=1.
\label{eq:H-recurrence}
\end{equation}

\subsection{Establishing density of the random trace} 
\label{sec:random-proof}

We have shown that the polytope's face-return isometries are arbitrarily close to the identity map infinitely often. What remains in this section is to make an argument that any point in the plane is reached almost surely. This is intuitively easy, as any open set in the plane can be entered by finite a sequence of rolls from the starting point. Since we pass by the starting point infinitely often, we have infinite attempts to run said finite sequence of rolls.

Fix a nonempty open disk $B\subseteq\RR^2$. From density, there is a finite rolling sequence $\mathcal R$ of length $m$ and a vertex of the polytope $v$ such that $\mathcal{R}(\position_0)(v)\in B$. Let $x=\mathcal{R}(\position_0)(v)$, and Define
$$
U_B=\{h\in G:h(x)\in B\}.
$$
The correspondence $h\mapsto h(x)$ is continuous, and
$\operatorname{id}(x)=x\in B$. Thus, $U_B$ is an open neighborhood of the identity in
$G$. By Proposition~\ref{prop:recurrence}, almost surely there are infinitely many $k$ such that $g_k\in U_B.$ From the definition of $U_B$, at time $\tau_k$, the vertex that originally landed at $x$ now lands at $g_k(x)\in B.$

The probability of following the prescribed sequence $\mathcal{R}$, starting from any placement
on $F_0$, is the fixed positive number $q_{\mathcal{R}}$. 
We now select the trials in a way that permits a direct use of the strong Markov property.
Let $(\mathcal G_n)$ be the filtration generated by the first $n$ rolls. Let $T_1$ be
the first return time $\tau_k$ for which $g_k\in U_B$, and, recursively, let
$$
T_{r+1}
 =\inf\{\tau_k:\tau_k\geq T_r+m\text{ and }g_k\in U_B\}.
$$
These are stopping times, and the fact that $g_k\in U_B$ infinitely often almost surely, every $T_r$ is finite almost surely. The intervals of rolls
$(T_r,T_r+m]$ are pairwise disjoint.

Let $A_r$ be the event that the $m$ rolls immediately after $T_r$ follow ${\mathcal{R}}$.
By the strong Markov property,
$$
\mathbb P(A_r\mid\mathcal G_{T_r})=q_{\mathcal{R}}
\qquad\text{almost surely}.
$$
Moreover, because the trial intervals do not overlap, the outcomes of the first $r-1$
trials are $\mathcal G_{T_r}$-measurable. Therefore, by induction,
$$
\begin{aligned}
\mathbb P(A_1^c\cap\cdots\cap A_N^c)
 &=\mathbb E\left[
   \mathbf 1_{A_1^c\cap\cdots\cap A_{N-1}^c}
   \mathbb P(A_N^c\mid\mathcal G_{T_N})
   \right] \\
 &=(1-q_{\mathcal{R}})\mathbb P(A_1^c\cap\cdots\cap A_{N-1}^c) \\
 &=(1-q_{\mathcal{R}})^N.
\end{aligned}
$$
Letting $N\to\infty$ shows that at least one of these trials succeeds almost surely. A
successful trial places a vertex in $B$, so
\begin{equation}
\mathbb P\bigl(B\cap\randomtrace\neq\varnothing\bigr)=1.
\label{eq:disk-hit}
\end{equation}

Finally, consider the countable collection of all open disks with rational centers and
positive rational radii. By \eqref{eq:disk-hit}, each such disk meets
$\randomtrace$ with probability one. Intersecting these countably many
probability-one events, we find that, almost surely, every rational disk meets
$\randomtrace$. Since these disks form a basis for the topology of
$\RR^2$, the random trace is almost surely dense. This proves
Theorem~\ref{thm:randomtrace}.

\section*{Acknowledgements}

Some details of the proof of Theorem~\ref{thm:randomtrace} were worked out with the assistance of ChatGPT Plus. 

\bibliographystyle{abbrv}
\bibliography{bibliography}

@article{dekker1959,
  title={On reflections in Euclidean spaces generating free products},
  author={Dekker, Th. J.},
  journal={Nieuw Arch. Wisk},
  volume={3},
  number={7},
  pages={57--60},
  year={1959}
}

@article{hegyvari1995,
  title={On the trace of polyhedra},
  author={Hegyv{\'a}ri, Norbert},
  journal={Geometriae Dedicata},
  volume={55},
  number={1},
  pages={71--74},
  year={1995},
  publisher={Springer}
}

@article{Hegyvari2016,
  author    = {Hegyv{\'a}ri, Norbert},
  title     = {On Deterministic and Random Rolling of Polyhedra},
  journal   = {International Electronic Journal of Geometry},
  volume    = {9},
  number    = {1},
  pages     = {85--88},
  year      = {2016},
  doi       = {10.36890/iejg.591896},
  url       = {https://doi.org/10.36890/iejg.591896}
}

@article {Hegyvari-Wintsche1998,
    AUTHOR = {Hegyv\'ari, Norbert and Wintsche, Gergely},
     TITLE = {The rolling polyhedra},
   JOURNAL = {Publ. Math. Debrecen},
  FJOURNAL = {Publicationes Mathematicae Debrecen},
    VOLUME = {52},
      YEAR = {1998},
    NUMBER = {1-2},
     PAGES = {43--54},
       DOI = {10.5486/pmd.1998.1741},
       URL = {https://doi.org/10.5486/pmd.1998.1741},
}

@book{wagon1993,
  title={The Banach-Tarski Paradox},
  author={Wagon, Stan},
  volume={24},
  year={1993},
  publisher={Cambridge University Press}
}

@article{BicchiChitourMarigo2004,
  author  = {Antonio Bicchi and Yacine Chitour and Alessia Marigo},
  title   = {Reachability and Steering of Rolling Polyhedra: A Case Study in Discrete Nonholonomy},
  journal = {IEEE Transactions on Automatic Control},
  volume  = {49},
  number  = {5},
  pages   = {710--726},
  year    = {2004},
  doi     = {10.1109/TAC.2004.826727} 
}

@inproceedings{BaesEtAl2022,
  author    = {Akira Baes and Erik D. Demaine and Martin L. Demaine and Elizabeth Hartung and Stefan Langerman and Joseph O'Rourke and Ryuhei Uehara and Yushi Uno and Aaron Williams},
  title     = {Rolling Polyhedra on Tessellations},
  booktitle = {11th International Conference on Fun with Algorithms (FUN 2022)},
  series    = {Leibniz International Proceedings in Informatics},
  volume    = {226},
  pages     = {6:1--6:16},
  publisher = {Schloss Dagstuhl--Leibniz-Zentrum f{\"u}r Informatik},
  year      = {2022},
  doi       = {10.4230/LIPIcs.FUN.2022.6}
}

@book{LevinPeres2017,
  author    = {David A. Levin and Yuval Peres},
  title     = {Markov Chains and Mixing Times},
  edition   = {2},
  note      = {With contributions by Elizabeth L. Wilmer},
  publisher = {American Mathematical Society},
  address   = {Providence, RI},
  year      = {2017},
  isbn      = {978-1-4704-2962-1}
}

@article{ChungFuchs1951,
  author  = {K. L. Chung and W. H. J. Fuchs},
  title   = {On the Distribution of Values of Sums of Random Variables},
  journal = {Memoirs of the American Mathematical Society},
  volume  = {1951},
  number  = {6},
  pages   = {1--12},
  year    = {1951}
}

@article{Crepel1974,
  author  = {Pierre Cr{\'e}pel},
  title   = {Marches al{\'e}atoires sur le groupe des d{\'e}placements du plan},
  journal = {Comptes Rendus de l'Acad{\'e}mie des Sciences de Paris, S{\'e}rie A},
  volume  = {278},
  pages   = {961--964},
  year    = {1974}
}

@article{BaldiBougerolCrepel1978,
  author  = {Paolo Baldi and Philippe Bougerol and Pierre Cr{\'e}pel},
  title   = {Th{\'e}or\`eme central limite local sur les extensions compactes de $\mathbb{R}^d$},
  journal = {Annales de l'Institut Henri Poincar{\'e}, Section B},
  volume  = {14},
  number  = {1},
  pages   = {99--111},
  year    = {1978}
}

@article{LindenstraussVarju2016,
  author  = {Elon Lindenstrauss and P{\'e}ter P. Varj{\'u}},
  title   = {Random Walks in the Group of Euclidean Isometries and Self-Similar Measures},
  journal = {Duke Mathematical Journal},
  volume  = {165},
  number  = {6},
  pages   = {1061--1127},
  year    = {2016},
  doi     = {10.1215/00127094-3167490}
}

\end{document}